\setlist{noitemsep}
\newcommand{\IR}{\mathbb{IR}}
\newcommand{\R}{\mathbb{R}}
\renewcommand{\b}[1]{\bm{#1}}
\newcommand{\defeq}{\mathrel{\mathop:}=} 
\newcommand{\T}{{}^{\mathrm{T}}}
\newcommand{\LP}{\textsl{LP}}
\newcommand{\ILP}{\textsl{ILP}}
\newcommand{\LS}{\textsl{LS}}
\newcommand{\ILS}{\textsl{ILS}}
\DeclareMathOperator{\diag}{diag}
\newcommand{\seg}[4][none]{
    \def\tempi{none}
    \def\temp{#1}
    \ifx\tempi\temp\def\tempswitch{0}\else\def\tempnamepath{\temp}\def\tempswitch{1}\fi
    \tikzmath{
        real \aaa, \bbb, \ccc, \ddd, \eee, \fff ;
        coordinate \sowe,\noea;
        \sowe = (current bounding box.south west);
        \noea = (current bounding box.north east);
        \aaa = #2;
        \bbb = #3;
        \ccc = #4 cm;
        int \isse;
        \isse = 0;
        coordinate \issecoo;
        coordinate \li, \ri, \ti, \bi;
        coordinate \starta, \enda;
        \starta = (\sowe) + (-1,-1);
        \enda = (\sowe)+(-1,-1);
        if \aaa == 0 then {
            \starta = (\sowex, \ccc/\bbb);
            \enda = (\noeax, \ccc/\bbb);
        }
        else {
            if \bbb == 0 then {
                \starta = (\ccc/\aaa, \sowey);
                \enda = (\ccc/\aaa, \noeay);
            }  
            else {
                \ddd = 1 pt;
                \eee = 0 pt;
                \fff = \sowex;
                \retx = (\fff*\bbb-\eee*\ccc) / (\ddd*\bbb-\eee*\aaa);
                \rety = (\fff*\aaa-\ddd*\ccc) / (\eee*\aaa-\ddd*\bbb);
                \li = ( \retx pt,\rety pt );
                \ddd = 1 pt;
                \eee = 0 pt;
                \fff = \noeax;
                \rety = (\fff*\aaa-\ddd*\ccc) / (\eee*\aaa-\ddd*\bbb);
                \retx = (\fff*\bbb-\eee*\ccc) / (\ddd*\bbb-\eee*\aaa);
                \ri = ( \retx pt ,\rety pt );
                \ddd = 0 pt;
                \eee = 1 pt;
                \fff = \noeay;
                \retx = (\fff*\bbb-\eee*\ccc) / (\ddd*\bbb-\eee*\aaa);
                \rety = (\fff*\aaa-\ddd*\ccc) / (\eee*\aaa-\ddd*\bbb);
                \ti = ( \retx pt,\rety pt );
                \ddd = 0 pt;
                \eee = 1 pt;
                \fff = \sowey;
                \rety = (\fff*\aaa-\ddd*\ccc) / (\eee*\aaa-\ddd*\bbb);
                \retx = (\fff*\bbb-\eee*\ccc) / (\ddd*\bbb-\eee*\aaa);
                \bi = ( \retx pt,\rety pt );
                if and(comparecoords(\li,\sowe),comparecoords(\noea,\li)) then {
                    \isse = 1;
                    \starta = (\li);
                };
                if and(comparecoords(\ti,\sowe),comparecoords(\noea,\ti)) then {
                    if \isse == 0 then {
                        \isse = 1;
                        \starta = (\ti);
                    }
                    else {
                        \enda = (\ti);
                    };
                };
                if and(comparecoords(\ri,\sowe),comparecoords(\noea,\ri)) then {
                    if \isse == 0 then {
                        \isse = 1;
                        \starta = (\ri);
                    }
                    else {
                        \enda = (\ri);
                    };
                };
                if and(comparecoords(\bi,\sowe),comparecoords(\noea,\bi)) then {
                    if \isse == 0 then {
                        \isse = 1;
                        \starta = (\bi);
                    }
                    else {
                        \enda = (\bi);
                    };
                };
            };
        };
        if \tempswitch == 0 then {
            { \draw (\starta) -- (\enda); };
        }
        else {
            { \coordinate (\tempnamepath-st) at (\starta); };
            { \coordinate (\tempnamepath-en) at (\enda); };
            { \draw [name path global=\tempnamepath] (\starta) -- (\enda); };
        };
    }
}
\newcommand{\defineintervalsymbols}[2][]{%
    \def\temp{#1}
    \ifx\temp\empty
    \def\comname{#2}
    \else
    \def\comname{#1}
    \expandafter\newcommand\csname #1\endcsname{\ensuremath{#2}}
    \fi
    \expandafter\newcommand\csname d\comname\endcsname{\ensuremath{\underline{#2}}}
    \expandafter\newcommand\csname u\comname\endcsname{\ensuremath{\overline{#2}}}
    \expandafter\newcommand\csname b\comname\endcsname{\ensuremath{\bm{#2}}}
    \expandafter\newcommand\csname m\comname\endcsname{\ensuremath{#2{}^c}}
    \expandafter\newcommand\csname r\comname\endcsname{\ensuremath{#2{}^\Delta}}
}
\newcommand{\scs}{s^\mathrm{s}}
\newcommand{\scp}{s^\mathrm{p}}
\newcommand{\Ds}{D^\mathrm{s}}
\newcommand{\Dp}{D^\mathrm{p}}
\newtheorem{theorem}{Theorem}
\newtheorem{lemma}{Lemma}
\newtheorem{corollary}{Corollary}
\theoremstyle{definition}
\newtheorem{definition}{Definition} 
\newtheorem{example}{Example}
\newtheorem{remark}{Remark}
\begin{document}

\title{Testing weak optimality of a given solution in interval linear programming revisited: NP-hardness proof, algorithm and some polynomial cases\thanks{The work of M.~Rada was supported by the Czech Science Foundation Grant P403/17-13086S. The work of M.~Hlad\'ik and E. Garajov\'a was supported by the Czech Science Foundation Grant P402/13-10660S. The work of E. Garajov\'a was also supported by Grant no. 156317 of Grant Agency of Charles University and by the grant SVV-2017-260452.}
}


\author{Miroslav Rada\footnote{
University of Economics, Faculty of Finance and Accounting, Department of Financial Accounting and Auditing, W. Churchill's Sq. 4, 130 67 Prague, Czech Republic,
e-mail: \texttt{miroslav.rada@vse.cz}
}\and
        Milan Hlad\'ik\footnote{
Charles University, Faculty  of  Mathematics  and  Physics,
Department of Applied Mathematics, 
Malostransk\'e n\'am.~25, 11800, Prague, Czech Republic, 
e-mail: \texttt{milan.hladik@matfyz.cz}
}\and
        Elif Garajov\'a\footnote{
Charles University, Faculty  of  Mathematics  and  Physics,
Department of Applied Mathematics, 
Malostransk\'e n\'am.~25, 11800, Prague, Czech Republic, 
e-mail: \texttt{elif@kam.mff.cuni.cz}
}
}

\maketitle

\begin{abstract}
    We address the problem of testing weak optimality of a given solution of a given interval linear program. The problem was recently wrongly stated to be polynomially solvable. We disprove it. We show that the problem is NP-hard in general. We propose a new algorithm for the problem, based on orthant decomposition and solving linear systems. Running time of the algorithm is exponential in the number of equality constraints. Interval linear programs with inequality constraints only can be processed in polynomial time.
\end{abstract}

\emph{Keywords.} Interval linear programming \and Weakly optimal solution \and Weak optimality testing

\section{Introduction}
\label{sec:introduction}
In this paper, we address the following problem:

``\emph{Given an interval linear program, decide whether a given weakly feasible solution is weakly optimal}''.

This problem was recently wrongly stated to be polynomially solvable in \cite{li:2015:Checkingweakoptimality}. Our aims are the following:
\begin{itemize}
    \item to show a counterexample to the method proposed by \cite{li:2015:Checkingweakoptimality} and explain what is wrong in their proof,
    \item to show that the problem is actually NP-hard in general,
    \item to propose an algorithm for the problem,
    \item to describe some polynomial cases.
\end{itemize}

\paragraph{Structure of the paper.} In Section \ref{sec:preliminaries}, we introduce the notion of interval linear programming and define our problem formally. 
Section \ref{sec:motivation} provides an overview of related work and some motivation for our paper. The introductory part of the paper is finalized by Section \ref{sec:counterexamples}, which provides a counterexample to the method proposed in~\cite{li:2015:Checkingweakoptimality} and also points to the weakness in the proof therein.

The main results of the paper are contained in Sections \ref{sec:np:hardness} and \ref{sec:polynomial:case}. In the former section, we prove that our problem is NP-hard (via reduction from testing solvability of interval linear systems). In the latter section, we prove that weak optimality of~a~given solution of~a~given interval linear program can be tested by solving $2^k$ linear programs (of the same size), where $k$ is the number of equality constraints in the interval linear program.

In particular, this means that if an interval linear program contains only inequality constraints, weak optimality of a given solution can be tested in polynomial time with one linear program. More generally, the test can be performed in polynomial time as long as the number of equality constraints remains ``small'' (for example constant or logarithmic).

To avoid confusion at this point, we premise that reasons of distinguishing equality and inequality constraints (which is not necessary in classical linear programming) will be clarified in Remark \ref{rem:distinguishing:constraints:variables}.

\subsection{Notation, intervals and interval linear programming}\label{sec:preliminaries}

For two real matrices $\dA,\uA \in \R^{m\times n}$ such that $\dA \le \uA$, an \emph{interval matrix} is the set of matrices $\b{A} \defeq \{A\in \R^{m\times n}:\ \dA \le A \le \uA\}$. The set of all interval matrices of dimension $m\times n$ is denoted by $\IR^{m\times n}$. Interval vectors and scalars are defined analogously. 

The multiplication of a real and an interval is defined as follows. Assume $\alpha \in \R$ and $[\da, \ua] \in \IR$. If $\alpha < 0$, then $\alpha[\da,\ua] = [\alpha\ua,\alpha\da]$, otherwise $\alpha[\da,\ua] = [\alpha\da,\alpha\ua]$.

Throughout the paper, bold symbols are reserved for interval matrices, vectors and scalars, while symbols in italics represent real structures. 
The symbol $0$ denotes the zero matrix or vector of suitable dimension. Also, $e$ is the vector of suitable dimension containing ones. Generally, we omit declaration of dimensions of matrix or vector variables wherever no confusion should arise. Vectors are understood columnwise.

The $i$th row of a (possibly interval) matrix $A$ is denoted by $A_i$. For vectors, the lower index points to the concrete element.
The symbol $\diag(a)$ for $a\in \R^n$ is the diagonal matrix with entries of $a$.

Definition \ref{def:interval:linear:programming:problem} introduces the notion of interval linear programming and also the interval linear systems.  

\begin{definition}[Interval linear programming]
  \label{def:interval:linear:programming:problem}\hfill
  \begin{enumerate}[inc:def:counter]
    \item Let the following interval matrices and vectors with dimensions in brackets be given: $$\bAf ({k \times m}),\bAn ({k \times n}),\bBf ({\ell \times m}),\bBn (\ell \times n),$$ accompanied with interval vectors $\ba \in \IR^k, \bb \in \IR^{\ell}, \bcf \in \IR^m, \bcn \in \IR^n$. 

        Define $\Ds \defeq \bAf \times \bAn \times \bBf \times \bBn \times \ba \times \bb $ and $\Dp \defeq \Ds \times \bcf \times \bcn$. The sets $\Dp$ will be called \emph{data of an interval linear program}. Analogously, $\Ds$ will be \emph{data of an interval linear system}.
    \item Denote a tuple from $\Ds$ by  $$\scs \defeq (\Af,\An,\Bf, \Bn, a, b).$$
      Any tuple from $\Dp$, say
      $$ \scp \defeq (\scs, \cf, \cn) \in \Dp$$ is called \emph{scenario} of interval linear program. Sometimes, also $\scs$ will be called \emph{scenario} of interval linear system.

      To every scenario, a linear program~\eqref{eq:lp} (shortly LP), denoted by $\LP(\scp)$, is associated:
      \begin{subequations}\label{eq:lp}%
  \begin{alignat}{3}%
      \min_{\xf, \xn}\ &&\phantom{=}
    \label{eq:lp:objective}(\cf)\T  \xf &+& (\cn)\T \xn & \quad \text{s.t.}\\
    \label{eq:lp:equalities} &&\phantom{=}\Af \xf &+& \An \xn & = a,\\
    \label{eq:lp:inequalities}        &&\phantom{=}\Bf \xf &+& \Bn \xn & \ge b,\\
    \label{eq:lp:nonnegativity}        && &&\phantom{=}\xn & \ge 0.
\end{alignat}
\end{subequations}

Analogously, to every $\scs \in \Ds$, a linear system \eqref{eq:lp:equalities}--\eqref{eq:lp:nonnegativity} is assigned; such a~system is denoted by $\LS(\scs)$.
    \item An \emph{interval linear program} (shortly ILP) with data $\Dp$, denoted by $\ILP(\Dp)$, is the family of linear programs $\{\LP(\scp):\,\scp \in \Dp\}$. 
    \item\label{def:interval:linear:system} An \emph{interval linear system} with data $\Ds$, denoted by $\ILS(\Ds)$ is the family of linear systems $\{\LS(\scs):\, \scs \in \Ds\}$.
  \end{enumerate}
\end{definition}
To simplify, an interval linear program is the family of linear programs with coefficients varying along given intervals. Similarly an interval linear system is the family of linear systems.

We will use interval linear programs and systems quite often. For convenience and readability, we will write them in short form: the interval linear program with data~$\Dp$ reads%
\begin{subequations}\label{eq:ilp}%
  \begin{alignat}{3}%
    \min\ &&\phantom{=}
    (\bcf)\T  \xf &+& (\bcn)\T \xn &\quad \text{s.t.}   \\
&&\phantom{=}\bAf \xf &+& \bAn \xn & = \ba,\\
            &&\phantom{=}\bBf \xf &+& \bBn \xn & \ge \bb,\\
            && &&\phantom{=}\xn & \ge 0,
\end{alignat}
\end{subequations}
the interval linear system will be written in  an analogous way.

\begin{remark}[on notation]
Some symbols in the paper have upper indices. These should simplify orientation in the (not very small) amount of different symbols. The indices are ``$\mathrm{p}$'' for ``of a program'', ``$\mathrm{s}$'' for ``of a system'', ``$\mathrm{f}$'' for ``free'' (variables) and ``$\mathrm{n}$'' for ``nonnegative'' (variables).
\end{remark}

\begin{remark}\label{rem:distinguishing:constraints:variables}
    Unlike for linear programming, the distinguishing of inequality and equality constraints  does matter here. The mutual transformation of the types of constraints is not possible in general. For example, an equality constraint $\b{A}x = b$ cannot be rewritten to the system $\b{A}x \le b, \b{A}x \ge b$. The former means $Ax \le b, Ax \ge b$ for all $A \in \b{A}$, while the latter reads $A_1x \le b, A_2 x \ge b$ for all $A_1\in \b{A}, A_2 \in \b{A}$. The problem is that we lose the dependency $A_1 = A_2$ during the transformation -- one calls this the \emph{dependency problem}. For further details, examples and possible transformations, see~\cite{GarHla2017a,Hla2017a}.

    The distinguishing of nonnegative and free variables has a bit different primary background. In fact, it turns out that many questions in interval analysis are much simpler with nonnegative variables than with free variables. Hence, the types of variables are often treated separately. For our results, this distinction is not really necessary. We do so just to demonstrate the generality of our results.
\end{remark}

\paragraph{Feasibility and optimality in classical linear programming.} Properties of \emph{feasibility} and \emph{optimality} are well known when dealing with linear systems or programs. This holds also for \emph{feasible} or \emph{optimal solution} of a linear system or program.  

For all the above properties, one can define a decision problem in form ``does the particular property hold for a given (solution of) linear program/system?''. Note that all such decision problems can be solved using algorithms for linear programming.



\paragraph{Feasibility and optimality in interval linear programming.}
For interval linear systems and programs, the above properties and associated decision problems are not so straightforward to formulate.
There are at least two quite natural ways to build analogous problems. For our paper, the analogies that could be called \emph{weak} problems are interesting. In Definitions \ref{def:weak:feasibility} and \ref{def:weak:optimality}, we build analogies to all the above properties. Actually, we are especially interested in Definitions \ref{def:weak:feasibility:problem} and \ref{def:weak:optimal:solution}.
For other concepts of feasibility and optimality in interval linear programming, see Remark \ref{rem:other:concepts:optimality}.


\begin{definition}[Weak feasibility]
  \label{def:weak:feasibility}
  Assume that data $\Ds$ of a system $\ILS(\Ds)$ are given. 
  \begin{enumerate}[inc:def:counter]
      \item\label{def:weak:feasibility:problem} The system $\ILS(\Ds)$ is \emph{weakly feasible}, if there exists $\scs \in \Ds$ such that $\LS(\scs)$ is feasible. 
      \item\label{def:weak:feasible:solution} A given $x = (\xf,\xn)$ is a \emph{weakly feasible solution} of $\ILS(\Ds)$, if there exists $\scs \in \Ds$ such that $x$ is a feasible solution of $\LS(\scs)$.
  \end{enumerate}
\end{definition}

\begin{definition}[Weak optimality]
    \label{def:weak:optimality} Assume that data $\Dp$ of a program $\ILP(\Dp)$ are given.
  \begin{enumerate}[inc:def:counter]
      \item \label{def:weak:optimality:problem}
          The program $\ILP(\Dp)$ is \emph{weakly optimal}, if there exists $\scp \in \Dp$ such that $\LP(\scp)$ has an optimum.
      \item \label{def:weak:optimal:solution}
          A given $x = (\xf,\xn)$ is a \emph{weakly optimal solution} of $\ILP(\Dp)$, if there exists $\scp \in \Dp$ such that $x$ is an optimal solution of $\LP(\scp)$.
  \end{enumerate}
\end{definition}

The formulation of the problem we are facing follows, in two variants.
\paragraph*{Our problems}
\begin{enumerate}[label=(P\arabic*),leftmargin=*,topsep=0pt]
    \item\label{enu:wopt}
        Given data $\Dp$ of an interval linear program and a solution $x = (\xf, \xn)$, test whether $x$ is weakly optimal, i.e. decide whether $x$ is optimal for some scenario $\scp \in \Dp$.
    \item\label{enu:cwopt} Decide \ref{enu:wopt}. If the answer is \emph{yes}, find a scenario witnessing it.
\end{enumerate}

The problem \ref{enu:cwopt} is a constructive version of \ref{enu:wopt}. Note also that a scenario is a~sufficient witness of weak optimality.

\begin{remark}
    \label{rem:other:concepts:optimality}
    Note that the weak problems ask in general the following question: ``Given a property of classical linear program/system, is the property satisfied for at least one scenario of a given interval linear program/system?''.
    If the quantifier ``at least one scenario'' is interchanged for ``every scenario'', one obtains \emph{strong} problems. For example, a feasible $x$ is a strongly optimal solution of a given $\ILP(\Dp)$, if $x$ is an optimal solution of $\LP(\scp)$ for every $\scp \in \Dp$.
The survey on results regarding both the weak and strong problems in interval linear programming can be found in \cite{hladik:2012:Intervallinearprogramming} and the corresponding problems related to interval linear systems of equations and inequalities in \cite{fiedler:2006:Linearoptimizationproblems}.
\end{remark}

\subsection{Related work}\label{sec:motivation}
This problem naturally emerged when dealing with various questions and problems regarding interval linear programming. A survey on results can be found in \cite{hladik:2012:Intervallinearprogramming}. Since that time, a partial characterization of the weakly optimal solution set was given in \cite{AshNeh2018} and an inner approximation was considered in \cite{AllNeh2013}.
More general concepts of solutions, extending weak and strong solutions, were recently addressed in \cite{Li2015,LiLiu2015,LuoLi2014a}. Particular quantified solutions were also studied in \cite{Hla2016c,Hla2017d}. Duality in interval linear programming, which helps in charaterizing of weak optimality, among others, was studied in \cite{NovHla2017a}.

\subsection{Counterexample and the weakness in the former proof}\label{sec:counterexamples}
We provide an example on which the general polynomial method for checking weak optimality presented in \cite{li:2015:Checkingweakoptimality} fails.

\begin{example}
    Consider the $\ILP((0,([0,2],[0,2]),0,0,2,0,0,(0,1)\T))$ with two nonnegative variables and one equality constraint
    \begin{equation*}
  \begin{alignedat}{2}%
    \min\ &&\phantom{=}
    (0,1) \xn &\quad \text{s.t.}   \\
              &&\phantom{=}([0,2],[0,2]) \xn & = 2,\\
              &&\phantom{=} \xn & \ge 0.\\
  \end{alignedat}
  \end{equation*}
  Assume we want to test the weak optimality of $x = \binom{1}{1}$. The method of \cite{li:2015:Checkingweakoptimality} selects the scenario with $\An = (1,1)$ and says that $x$ is weakly optimal if and only if the system with variables $y^1 \in \R, c^1 \in \R^2$ (we use the notation of the original paper)
  \begin{align*}
      y^1 \textstyle\binom{1}{1}  = c^1,&\qquad \text{ \small (dual feasibility constraint, (6a) in the original paper)}\\
      \textstyle\binom{0}{1} \le c^1 \le \binom{0}{1},& \qquad \text{ \small (scenario feasibility constraint, (6f) in the original paper)}
  \end{align*}
if feasible.

This yields the result that $x$ is not weakly optimal. This is wrong, since $x$ is optimal for a scenario with $\An = (0,2)$.

\end{example}

The problem is that the method selects a fixed scenario (see the system (4) in the original paper) without taking \emph{dual program} into account. In particular, the objective vectors do not influence the selected scenario at all.
Then, the weakness of the proof of the main theorem (Theorem 3.1 in the original paper) is in the ``only if'' part, namely in the first sentence on page 84. The paper states that there exists a solution satisfying the systems (20) and (21) therein (read ``there exists an optimal solution of the dual program''), however, this is not ensured, since the scenario was chosen to satisfy the primal feasibility only.

Unfortunately, this cannot be fixed by means of any polynomial method, since we will show in Section \ref{sec:np:hardness} that it is an NP-hard problem in general.

\section{Auxiliary result -- strong duality in LP}\label{sec:auxiliary:results}
In the next sections, we will strongly rely on the obvious characterization of the set of optimal solutions of an interval linear program using strong duality theorem for linear programming.
\begin{lemma}[Characterization of weak optimality using strong duality]
    \label{lem:characterization:weak:optimality}
    Consider an interval linear program $\ILP(\Dp) = \ILP(\bAf, \bAn, \bBf, \bBn, \ba, \bb, \bcf, \bcn)$. The solution $x = (\xf, \xn)$ is a weakly optimal solution of $\ILP(\Dp)$, if and only if $(\xf, \xn, \yf, \yn, \scp)$ is a feasible solution of the system 
    \begin{subequations}
        \label{eq:strong:duality}
  \begin{alignat}{3}%
    &&\Af \xf &+& \An \xn & = a,\nonumber\\
      &&\Bf \xf &+& \Bn \xn & \ge b,\label{eq:strong:duality:primal}\\
     && &&\xn & \ge 0,\nonumber\\
     &&(\Af)\T \yf &+& (\Bf)\T \yn &= \cf,\nonumber\\
      &&(\An)\T \yf &+&  (\Bn)\T \yn &\le \cn,\label{eq:strong:duality:dual}\\
    && && \yn &\ge 0,\nonumber\\
      && \yn_i(b - \Bf\xf &-& \Bn \xn)_i &= 0, \quad i =1,\dots,\ell,\label{eq:strong:duality:complementarity}\\
      && \xn_i(\cn - (\An)\T\yf &-& (\Bn)\T \yn)_i &= 0, \quad i = 1, \dots n,\label{eq:strong:duality:complementarity:primal}
\end{alignat}%
    \begin{equation}
        \label{eq:characterization:weak:optimality:constraints}
    \begin{alignedat}{4}
        \Af & \in \bAf, &\quad \An &\in \bAn, &\quad \Bf & \in \bBf, &\quad \Bn &\in \bBn, \\
        a & \in \ba,    &\quad  b & \in \bb, &\quad \cf &\in \bcf, &\quad \cn &\in \bcn
    \end{alignedat}
\end{equation}
\end{subequations}
    for some $\yf , \yn$ and $(\Af, \An, \Bf, \Bn, a, b, \cf, \cn) = \scp$.
\end{lemma}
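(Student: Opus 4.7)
The plan is to reduce the claim to two ingredients that are already standard: Definition 1.5b, which expresses weak optimality as the existence of a scenario $\scp \in \Dp$ making $x$ optimal for $\LP(\scp)$, and the strong duality theorem for classical linear programming. The constraints \eqref{eq:characterization:weak:optimality:constraints} simply encode the condition $\scp \in \Dp$, so, for each fixed scenario, it suffices to show that $x$ is optimal for $\LP(\scp)$ if and only if some $(\yf,\yn)$ together with $x$ satisfies the remaining conditions \eqref{eq:strong:duality:primal}--\eqref{eq:strong:duality:complementarity:primal}.

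First I would write down the dual of $\LP(\scp)$ explicitly. Since $\xf$ is free and $\xn$ is sign-constrained nonnegative, and since the primal contains equality constraints with right-hand side $a$ and $\ge$-constraints with right-hand side $b$, the dual is the maximization problem with free $\yf$ (paired with equalities), nonnegative $\yn$ (paired with inequalities), an equality dual constraint $(\Af)\T\yf + (\Bf)\T \yn = \cf$ (from the free primal block), and an inequality dual constraint $(\An)\T\yf + (\Bn)\T\yn \le \cn$ (from the nonnegative primal block). These are exactly the conditions appearing in \eqref{eq:strong:duality:dual}. The complementary slackness conditions between primal $\ge$-constraints and nonnegative dual variables give \eqref{eq:strong:duality:complementarity}, while those between nonnegative primal variables and the $\le$-type dual constraints give \eqref{eq:strong:duality:complementarity:primal}.

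For the forward direction, if $x$ is weakly optimal, pick a witnessing scenario $\scp \in \Dp$. Since $\LP(\scp)$ has an optimal primal solution $x$, strong duality provides a dual optimal $(\yf,\yn)$, and the primal-dual pair automatically satisfies the primal feasibility, dual feasibility, and complementary slackness relations listed above; augmented with \eqref{eq:characterization:weak:optimality:constraints} coming from $\scp \in \Dp$, this yields a feasible point of \eqref{eq:strong:duality}. Conversely, given a feasible point of \eqref{eq:strong:duality}, the membership block \eqref{eq:characterization:weak:optimality:constraints} supplies a scenario $\scp \in \Dp$ for which $x$ is primal feasible, $(\yf,\yn)$ is dual feasible, and the two pairs satisfy complementary slackness; by strong duality applied to $\LP(\scp)$, $x$ is optimal there, hence weakly optimal. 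The only real obstacle is bookkeeping the dual correctly and aligning the complementary slackness terms with the right primal/dual blocks; beyond that the proof is a direct appeal to LP strong duality.
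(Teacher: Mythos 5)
Your proposal is correct and follows essentially the same route as the paper's own proof: fix a witnessing scenario, apply LP strong duality (equivalently, the primal feasibility, dual feasibility and complementary slackness characterization of optimality) to $\LP(\scp)$, and observe that \eqref{eq:characterization:weak:optimality:constraints} just encodes $\scp \in \Dp$. The only difference is that you spell out the dual of $\LP(\scp)$ explicitly, which the paper leaves implicit.
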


For a fixed scenario $\scp$, the constraints \eqref{eq:strong:duality:primal} correspond to the feasibility of primal program, the constraints \eqref{eq:strong:duality:dual} to the feasibility of dual program, and the constraints \eqref{eq:strong:duality:complementarity} and \eqref{eq:strong:duality:complementarity:primal} to the complementary slackness.
\begin{proof}[of Lemma \ref{lem:characterization:weak:optimality}]
    If $x=(\xf,\xn)$ is a weakly optimal solution, there exists a~scenario $\scp \in \Dp$ such that $x$ is an optimal solution of $\LP(\scp)$. Using the well known strong duality theorem we know that there exists a~tuple $(\xf, \xn, \yf, \yn)$ such that $(\xf, \xn, \yf, \yn, \scp)$ solves \eqref{eq:strong:duality}. 

    Similarly, if $(\xf, \xn, \yf, \yn, \scp)$ is a feasible solution of \eqref{eq:strong:duality}, we obtain that $(\xf, \xn)$ is a weakly optimal solution of $\ILP(\Dp)$ for scenario $\scp$ from application of the strong duality theorem.
\end{proof}

\begin{remark}
Note that the system \eqref{eq:strong:duality}
\begin{itemize}[topsep=0pt]
    \item is nonlinear and remains nonlinear even for fixed $(\xf, \xn)$,
    \item can be rewritten to a linear system for fixed $\scp$,
    \item is \emph{not} a standard interval linear system due to the dependency problem: note the multiple occurrences of individual coefficients that could be considered interval coefficients (see Remark \ref{rem:distinguishing:constraints:variables}). It is rather a linear parametric system, where parameters attain values from given intervals. This is actually what makes the weak optimality harder to grasp than weak feasibility for interval linear program (cf.~this with classical linear program, where optimality and feasibility are essentially the same problems).
\end{itemize}
\end{remark}

\begin{remark}[Geometry of the strong duality -- implications on ILPs]
    A solution $x$ of a linear program is optimal if it is feasible (primal feasibility constrains \eqref{eq:strong:duality:primal}) and if the objective vector can be obtained as combination of normals of \emph{active} constraints (this is the dual feasibility \eqref{eq:strong:duality:dual}), i.e. such constraints that are satisfied as equalities (the first complementarity condition \eqref{eq:strong:duality:complementarity} controls that only active constraints can be used in combination). Coefficient $\cn_i$ of a nonnegative variable may be higher than the summed coefficients of scaled active normal, but only if corresponding variable $\xn_i$ is zero (the second complementarity condition \eqref{eq:strong:duality:complementarity:primal}).
Note also that equality constraints are active ``in both directions'' -- their normals can be used also with negative coefficients in the combination.

Geometrically, dual variables $\yf$ and $\yn$ scale the normals of active primal constraints and sum them. If there is a way how to scale and sum them up to the objective vector, it means that the primal solution is optimal.

In classical linear programming, the optimality test of $x$ is easy since all the normals of active constraints are known. In interval linear programming, the task is not so straightforward, since for different scenarios we can have different normals and possibly also different sets of active constraints. However, the set of all possible normals for each individual constraint can definitely  be expressed as a polytope. In fact, this is exactly what the inequalities \eqref{eq:strong:duality:primal} and \eqref{eq:characterization:weak:optimality:constraints} do (for a fixed $x$). However, the conic hull constraints \eqref{eq:strong:duality:dual} are nonlinear, since coefficients of conic combination are multiplied by the active normals. 
\end{remark}



\section{NP-hardness proof}\label{sec:np:hardness}
In this section, we prove that the problem \ref{enu:wopt} is NP-hard. We show this by a reduction from weak feasibility testing of an interval system of inequalities with free variables. We lean on the well-known result stated in Lemma \ref{the:weak:feasibility:np:hard}.
\begin{lemma}[NP-hardness of weak feasibility, Rohn {\cite[p. 58]{fiedler:2006:Linearoptimizationproblems}}]
    \label{the:weak:feasibility:np:hard}
    Consider the family of interval linear systems with free variables and inequality constraints only, in form 
    \begin{equation}
        \bBf \xf \le \bb,
        \label{eq:inequality:free:variables}
    \end{equation}
    i.e. the family of interval systems with data $\Ds \defeq (0, 0, -\bBf, 0, 0, -\bb)$. 

    The problem ``given data $\Ds$, decide whether $\ILS(\Ds)$ is weakly feasible'' is NP-hard.
\end{lemma}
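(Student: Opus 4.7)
The plan is to prove the lemma by a polynomial-time reduction from the PARTITION problem, which is classically NP-complete: given positive integers $a_1,\dots,a_n$ with $\sum_{i=1}^n a_i = 2S$, decide whether there exists $y\in\{-1,+1\}^n$ satisfying $\sum_{i=1}^n a_i y_i = 0$. Given such an instance, I would construct in polynomial time an interval system of the form~\eqref{eq:inequality:free:variables} over $n$ free variables $y_1,\dots,y_n$ (playing the role of $\xf$) whose weak feasibility is equivalent to the existence of a valid partition.

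The central gadget forces each coordinate $y_i$ into the discrete set $\{-1,+1\}$ using three rows per index:
\begin{align*}
  y_i &\le 1,\\
  -y_i &\le 1,\\
  [-1,1]\,y_i &\le -1.
\end{align*}
The first two rows are crisp (no interval coefficient) and together impose $|y_i|\le 1$. The third row uses a genuine interval coefficient and, by definition of weak feasibility, is satisfiable for the given $y_i$ iff some $c\in[-1,1]$ satisfies $cy_i\le -1$; a short case analysis on the sign of $y_i$ shows this is equivalent to $|y_i|\ge 1$. Combined, the three rows force $y_i\in\{-1,+1\}$. To complete the encoding I append the two crisp rows $\sum_i a_iy_i\le 0$ and $-\sum_i a_iy_i\le 0$, enforcing $\sum_i a_iy_i=0$.

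The resulting interval system has $3n+2$ rows, $n$ free variables, integer entries of size polynomial in the input, and is by construction in the form~\eqref{eq:inequality:free:variables}. It is weakly feasible iff the given PARTITION instance is a yes-instance. Since PARTITION is NP-complete and the reduction is polynomial, NP-hardness of weak feasibility testing follows.

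The step that I expect to require most care is the third inequality of the gadget: one must exploit that weak feasibility allows each interval coefficient to be realised independently from one row to another (cf.\ Remark~\ref{rem:distinguishing:constraints:variables}), so the scalar witnessing $[-1,1]\,y_i\le -1$ for index $i$ need not agree with the corresponding scalar for any other index. Once this is made explicit, the equivalence ``$[-1,1]\,y_i\le -1$ weakly feasible $\Leftrightarrow |y_i|\ge 1$'' reduces to checking the minimum of $cy_i$ over $c\in[-1,1]$, and correctness of the whole reduction becomes immediate.
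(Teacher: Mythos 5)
Your reduction is correct: the three rows per index do force $y_i\in\{-1,+1\}$ (the interval row $[-1,1]\,y_i\le -1$ is satisfiable for some scenario iff $\min_{c\in[-1,1]}cy_i=-|y_i|\le -1$), the interval entries in distinct rows are indeed chosen independently in a scenario of $\bBf\xf\le\bb$, and the two crisp aggregate rows then make weak feasibility equivalent to the PARTITION instance. Note, however, that the paper does not prove this lemma at all --- it imports it as a known result of Rohn (Fiedler et al., p.~58), so any proof you give is necessarily a different route from ``the paper's.'' Compared with the classical argument, which typically reduces from MAX-CUT (via NP-hardness of computing $\max\{\|Ax\|_1:\|x\|_\infty\le 1\}$) and therefore yields NP-hardness already for instances with $\{-1,0,1\}$-sized data (i.e.\ strong NP-hardness), your PARTITION reduction is more elementary and fully self-contained but only establishes NP-hardness in the ordinary (weak) sense, since PARTITION admits a pseudo-polynomial algorithm. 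That weaker conclusion is exactly what the lemma states and is all that the subsequent NP-hardness proof of Theorem~\ref{the:weak:optimality:np:hard} requires, so your argument is an acceptable substitute; just be aware it proves slightly less than the cited original.
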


Our result follows:
\begin{theorem}
    \label{the:weak:optimality:np:hard}
    The problem \ref{enu:wopt} is NP-hard.    
\end{theorem}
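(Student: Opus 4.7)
The plan is to give a polynomial-time reduction from the NP-hard weak feasibility problem of Lemma~\ref{the:weak:feasibility:np:hard} to~\ref{enu:wopt}. Given data $(\bBf,\bb)$ describing an interval system $\bBf\xf\le\bb$ with $\bBf\in\IR^{\ell\times m}$ and $\bb\in\IR^{\ell}$, I would construct in polynomial time an ILP together with a candidate solution $x^\star$, and show that $x^\star$ is weakly optimal for the constructed ILP if and only if $\bBf\xf\le\bb$ is weakly feasible.

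The construction is guided by Farkas' lemma: for any fixed $(\Bf,b)\in\bBf\times\bb$, the system $\Bf\xf\le b$ is solvable if and only if $b\T y\ge 0$ for every $y\ge 0$ with $(\Bf)\T y=0$, which is the same as saying that $y=0$ is an optimal solution of the LP $\min b\T y$ s.t.\ $(\Bf)\T y=0,\ y\ge 0$. I would therefore associate to $(\bBf,\bb)$ the ILP
\begin{equation*}
\min (\bb)\T \xn \quad\text{s.t.}\quad (\bBf)\T \xn = 0,\ \xn \ge 0,
\end{equation*}
formally the ILP whose data has $\bAn \defeq (\bBf)\T$, $\ba \defeq 0$, $\bcn \defeq \bb$ and all other components absent, and I would take $x^\star\defeq 0 \in \R^{\ell}$ as candidate.

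The key step is then the equivalence between weak optimality of $x^\star$ and weak feasibility of $\bBf\xf\le\bb$. For the ``if'' direction, a witness $(\Bf,b,\xf)$ of weak feasibility of the source system yields, via Farkas, that $\xn=0$ is optimal in the corresponding scenario of the constructed ILP, which Lemma~\ref{lem:characterization:weak:optimality} then packages into a formal certificate. Conversely, any scenario $\scp$ of the constructed ILP in which $\xn=0$ is optimal provides $\Bf\in\bBf$ and $b\in\bb$ (read off from $\An$ and $\cn$) for which Farkas produces an $\xf$ with $\Bf\xf\le b$.

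I do not anticipate a serious obstacle: the whole argument is just a packaging of classical LP duality for $\min 0$ over $\Bf\xf\le b$. The only points worth a sanity check are that the construction fits Definition~\ref{def:interval:linear:programming:problem} (it does, with $k=m$ equality constraints, $n=\ell$ nonnegative variables, and no free variables or inequality constraints), and that the reduction is polynomial, which is immediate.
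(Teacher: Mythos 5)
Your reduction is correct and is essentially the paper's own proof: the authors also reduce from Lemma~\ref{the:weak:feasibility:np:hard} by forming the ILP $\min\,(\bcn)\T\xn$ s.t.\ $\bAn\xn=0$, $\xn\ge 0$ with $\bAn=(\bBf)\T$, $\bcn=\bb$, and testing weak optimality of $\xn=0$, which via strong duality (their Lemma~\ref{lem:characterization:weak:optimality}) collapses to weak feasibility of $(\bAn)\T\yf\le\bcn$. Your invocation of Farkas' lemma scenario-by-scenario is just a repackaging of the same duality step, so the two arguments coincide.
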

\begin{proof}
    Consider the family of interval linear programs with nonnegative variables and equality constraints only, i.e. the family of interval linear programs with data in form $\Dp = (0, \bAn, 0, 0, 0, 0, 0 ,\bcn)$. 

    Using Lemma \ref{lem:characterization:weak:optimality} we know that a given $x$ is a weakly optimal solution of $\ILP(\Dp)$ if and only if the system \eqref{eq:strong:duality} has a solution for the fixed $x$. Hence, the system reads (zero rows and summands are omitted)
    \begin{equation}
        \label{eq:wopt:np:hard:short}
    \begin{alignedat}{2}
        \An \xn &= 0, \\
            \xn &\ge 0, \\
            (\An)\T \yf & \le \cn, \\
            \xn_i ( \cn - (\An)\T \yf)_i        & = 0,& \qquad i=1,\dots,n,\\
            \An \in \bAn, \quad \cn &\in \bcn.&
\end{alignedat}
\end{equation}
Now, assume that we want to test weak optimality of the solution $\xn = 0$. The system \eqref{eq:wopt:np:hard:short} becomes 
\begin{subequations}
    \begin{alignat}{1}
        (\An)\T \yf & \le \cn , \label{eq:wopt:np:hard:shortest:inequalities}\\ 
        \An \in \bAn, \quad \cn &\in \bcn.
\end{alignat}
    \label{eq:wopt:np:hard:shortest}
\end{subequations}

We have that $\xn =  0$ is a weakly optimal solution of $\ILP(\Dp)$ if and only if the inequality system \eqref{eq:wopt:np:hard:shortest:inequalities} is feasible for at least one $(\An, \cn) \in (\bAn, \bcn)$, which actually is exactly the problem of testing weak feasibility of the interval linear system with data $(0,0,(\bAn)\T, 0,0, \bcn)$. For such an interval linear system, testing weak feasibility is NP-hard due to Lemma \ref{the:weak:feasibility:np:hard}.

We have that an algorithm for the problem \ref{enu:wopt} can be used to solve an NP-hard problem, hence the problem \ref{enu:wopt} is at least as hard.
\end{proof}

\section{Algorithm for \ref{enu:cwopt}}\label{sec:polynomial:case}

In this section, we describe an algorithm for solving the problem \ref{enu:cwopt}. First, we demonstrate the idea on a simpler case with no equality constraint. Then we show that it can be rewritten to treat interval linear programs in their full generality.

Recall the notation introduced in Definition \ref{def:interval:linear:programming:problem}: the symbol $k$ denotes the number of equality constraints, the number of inequality constraints is denoted by $\ell$.
Our method will be able to test weak optimality of a given point by solving $2^k$ feasibility problems of classical linear systems.

\subsection{The simple case: inequality constraints}
\label{sub:inequalities}

Weak optimality of a given $(\xf,\xn)$ can be tested via solving the nonlinear system \eqref{eq:strong:duality} by Lemma \ref{lem:characterization:weak:optimality}.
Our key idea is the following: if $k=0$, the nonlinear system \eqref{eq:strong:duality} can be rewritten as a linear system.
A nice geometric trick takes place here: a~special form of \emph{disjunctive programming} (see e.g. \cite{balas:1998:DisjunctiveprogrammingProperties}) can be utilized. 
Disjunctive programming is a tool for modelling (a hull of) disjunction of some suitably represented sets (e.g. convex polyhedra). 

It is based on the observation that polyhedra can be scaled by scaling right hand sides only. Consider $P^1 = \{x: A^1 x \le b^1 \}$ and $P^2 = \{x: A^2 x \le b^2\}$. The natural way to express the conic hull of $P^1 \cup P^2$ is 
$$\{x: x = y^1 x^1 + y^2x^2,\ A^1x^1 \le b^1,\ A^2x^2 \le b^2,\ 0\le y^1,\ 0\le y^2\}, $$
which is apparently nonlinear. However, the coefficients $y^1$ and $y^2$ can be moved into the description of $P^1$ and $P^2$, removing nonlinearity:
$$\{z: z = z^1 + z^2,\ A^1z^1 \le y^1 b^1,\ A^2z^2 \le y^2 b^2,\ 0\le y^1,\ 0\le y^2\}. $$
Note that we actually use substitution $z^i = y^i x^i$. This is possible since since $y^i \ge 0$.
Note also that if one of the coefficients, say $y^i$, is zero, it means that $z^i$ is also zero. 

The above linearization applied on the system \eqref{eq:strong:duality} allows for deriving Theorem~
\ref{the:inequality:ilp:polynomial}. Similarly as in the above simple example, we scale the limits of the intervals in $\bBf, \bBn$ and $\bb$ by the corresponding dual variables and substitute new variables $(\Bfp, \Bnp, \bp) = \diag(y^n)(\Bf, \Bn, b)$.

\begin{theorem}
    \label{the:inequality:ilp:polynomial}
    Let an interval linear program $\ILP(\Dp)$ with data 
    $$\Dp = (0,0,\bBf, \bBn, 0, \bb,\bcf,\bcn) $$ 
    be given. 
    
    A given $x = (\xf, \xn)$ is a weakly optimal solution of $\ILP(\Dp)$ if and only if it is a weakly feasible solution of $\ILS((0,0,\bBf, \bBn, 0, \bb))$ and there exists a solution $(\Bfp, \Bnp, \yn)$ of the system
    \begin{subequations}
        \label{eq:inequalities:testing:system}
        \begin{gather}
            \label{eq:inequalities:testing:system:scaling:primal}
            \Bfp \in \diag(\yn)\bBf, \quad \Bnp  \in \diag(\yn)\bBn,\quad \bp\in \diag(\yn)\bb,\\
            \label{eq:inequalities:testing:system:scaled:primal:feasibility}
            \Bfp \xf + \Bnp \xn = \bp,\\
            \label{eq:inequalities:testing:system:scaled:dual:feasibility}
            (e\T \Bfp)\T \in \bcf,\\
            \label{eq:inequalities:testing:system:scaled:dual:feasibility:nonnegative:1}
            (e\T\Bnp)_i \in (\bcn)_i\quad \forall i \in \{\iota| x_\iota > 0\},\\
            \label{eq:inequalities:testing:system:scaled:dual:feasibility:nonnegative:2}
            (e\T\Bnp)_i  \le (\ucn)_i\quad \forall i \in \{\iota| x_\iota = 0\}, \\
            \yn \ge 0.
            \label{eq:inequalities:testing:system:nonnegativity}
    \end{gather}
    \end{subequations}
    If so, a scenario witnessing weak optimality of $x$ is 
    \begin{equation}
        \label{eq:inequality:witness}s^w = (0,0,\Bf, \Bn, 0, b, (e\T\Bfp)\T, \cn),
    \end{equation} 
    where $i$th row of $\Bf, \Bn, b$ is determined as follows:
    \begin{itemize}
        \item if $\yn_i > 0$, then $(\Bf_i, \Bn_i, b_i) = \frac{1}{\yn_i} (\Bfp_i, \Bnp_i, \bp_i)$, 
        \item else $(\Bf_i, \Bn_i, b_i)$ is determined as a solution of linear system 
            \begin{equation}
            \Bf_i \xf + \Bn_i \xn \ge b_i, \quad \Bf_i \in \bBf_i,\quad \Bn_i \in \bBn_i, \quad b_i \in \bb_i,
                \label{eq:scenario:for:satisfying:i:th:inequality}
            \end{equation}
    \end{itemize}
    and $i$th row of $\cn$ is determined as
    \begin{equation*}
        \cn_i = \left\{
            \begin{array}{ll}
                \ucn_i& \quad\text{if $x_i=0$},\\
                (e\T\Bnp)_i&\quad\text{otherwise}.
    \end{array}
    \right.
    \end{equation*}

\end{theorem}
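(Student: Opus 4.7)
The plan is to verify both directions of the biconditional via the strong-duality characterization of Lemma \ref{lem:characterization:weak:optimality} specialised to $k=0$, with the key device being the substitution $(\Bfp,\Bnp,\bp) \defeq \diag(\yn)(\Bf,\Bn,b)$ that absorbs the dual multipliers into the constraint data and linearises the products $(\Bf)\T\yn$, $(\Bn)\T\yn$, and the primal complementarity $\yn_i (b - \Bf\xf - \Bn\xn)_i = 0$.

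For the forward direction I would start from a weakly optimal $x$ and invoke Lemma~\ref{lem:characterization:weak:optimality} to obtain a scenario $\scp \in \Dp$ and multipliers $\yn \ge 0$ solving \eqref{eq:strong:duality} (the equality-related blocks are vacuous). Weak feasibility of $x$ for $\ILS((0,0,\bBf,\bBn,0,\bb))$ is read off directly from \eqref{eq:strong:duality:primal}. Setting $\Bfp,\Bnp,\bp$ as above, \eqref{eq:inequalities:testing:system:scaling:primal} is immediate because $\yn \ge 0$ rescales each interval row by a nonnegative scalar; the primal complementarity \eqref{eq:strong:duality:complementarity} turns rowwise into $\bp_i = (\Bfp\xf+\Bnp\xn)_i$, which is \eqref{eq:inequalities:testing:system:scaled:primal:feasibility}. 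The computation $(\Bf)\T \yn = \sum_i \yn_i \Bf_i\T = \sum_i \Bfp_i\T = (e\T\Bfp)\T$, combined with $\cf \in \bcf$, yields \eqref{eq:inequalities:testing:system:scaled:dual:feasibility}; the analogous computation for $\Bn$ together with \eqref{eq:strong:duality:complementarity:primal} produces \eqref{eq:inequalities:testing:system:scaled:dual:feasibility:nonnegative:1} and \eqref{eq:inequalities:testing:system:scaled:dual:feasibility:nonnegative:2}.

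For the reverse direction I would construct the witness scenario $s^w$ exactly as in \eqref{eq:inequality:witness} and verify that it lies in $\Dp$ and that $x$ together with $\yn$ satisfies the strong-duality system for $s^w$. For rows with $\yn_i > 0$, dividing by $\yn_i$ recovers $(\Bf_i,\Bn_i,b_i) \in \bBf_i \times \bBn_i \times \bb_i$ with $\Bf_i\xf + \Bn_i\xn = b_i$, so primal feasibility and the primal complementarity hold simultaneously. For rows with $\yn_i = 0$, \eqref{eq:inequalities:testing:system:scaling:primal} forces $(\Bfp_i,\Bnp_i,\bp_i) = 0$, so primal feasibility must be supplied independently from \eqref{eq:scenario:for:satisfying:i:th:inequality}, while the primal complementarity is automatic. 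Dual feasibility then reduces to $(\Bf)\T \yn = \sum_i \Bfp_i\T = (e\T\Bfp)\T = \cf$ and $((\Bn)\T \yn)_i = (e\T\Bnp)_i \le \cn_i$ by the prescribed value of $\cn$; the dual complementarity is direct, since for $\xn_i > 0$ we have $\cn_i = (e\T\Bnp)_i = ((\Bn)\T\yn)_i$ by \eqref{eq:inequalities:testing:system:scaled:dual:feasibility:nonnegative:1}, and for $\xn_i = 0$ it is trivial.

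The main obstacle I expect is the careful handling of the rows with $\yn_i = 0$: the substitution erases all information about the underlying $(\Bf_i,\Bn_i,b_i)$, yet the witness scenario must still supply a realisation of that row inside the original interval box. Here the separate weak-feasibility hypothesis is essential, and it suffices precisely because a pure interval inequality system has no coefficient sharing across distinct rows, so weak feasibility decomposes rowwise and each system \eqref{eq:scenario:for:satisfying:i:th:inequality} is solvable independently. A secondary check is that the prescribed $\cn$ lies in $\bcn$: if $x_i = 0$ the choice $\cn_i = \ucn_i$ belongs trivially to $(\bcn)_i$, and if $x_i > 0$ this is exactly \eqref{eq:inequalities:testing:system:scaled:dual:feasibility:nonnegative:1}.
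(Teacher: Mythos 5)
Your proposal is correct and follows essentially the same route as the paper's own proof: the substitution $(\Bfp,\Bnp,\bp)=\diag(\yn)(\Bf,\Bn,b)$ for the forward direction, and for the converse the construction of $s^w$ with the case split on $\yn_i>0$ versus $\yn_i=0$, using the weak-feasibility hypothesis to realise the rows erased by the substitution. Your remarks on why weak feasibility suffices rowwise and on checking $\cn\in\bcn$ are correct elaborations of steps the paper treats as immediate.
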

\begin{proof}
    \hfill

    ``$\Rightarrow$'': We know that $x$ is a weakly optimal solution, hence there is a scenario $\scp = (0,0,\Bf, \Bn, 0, b, \cf, \cn)$ and a vector $\yn$ such that it together solves the system \eqref{eq:strong:duality} for the fixed $x$. Then also $(\Bfp, \Bnp, \bp) = (\diag(\yn) \Bf, \diag(\yn) \Bn, \diag(\yn) b)$ solves the system \eqref{eq:inequalities:testing:system}, since 
    \begin{itemize}[topsep=0pt]
        \item 
     the rows in relations \eqref{eq:inequalities:testing:system:scaling:primal} are only scaled or nullified rows of some constraints in the system \eqref{eq:strong:duality}   
 \item  constrains \eqref{eq:inequalities:testing:system:scaled:dual:feasibility}--\eqref{eq:inequalities:testing:system:nonnegativity} are actually contained in system \eqref{eq:strong:duality}, and
 \item an $i$th row of \eqref{eq:inequalities:testing:system:scaled:primal:feasibility} either has the form $0 = 0$ (if $\yn_i = 0$), or (otherwise) is satisfied via $i$th row of \eqref{eq:strong:duality:primal}, which is satisfied as equality due to $i$th complementarity constraint in \eqref{eq:strong:duality:complementarity}.
    \end{itemize}
    
``$\Leftarrow$'' and ``If so'':
Assume $(\Bfp, \Bnp, \bp, \yn)$ solves \eqref{eq:inequalities:testing:system} for a given weakly feasible $x$. Since $x$ is weakly feasible, a solution $(\Bf_i, \Bn_i, b_i)$ of the system \eqref{eq:scenario:for:satisfying:i:th:inequality} exists for every $i=1,\dots,\ell$. Hence, we can construct the scenario $s^w$ in \eqref{eq:inequality:witness}. 

Now, note that $\yn$ and $s^w$ solve \eqref{eq:strong:duality} (and hence $x$ is weakly optimal):
\begin{itemize}[topsep=0pt]
    \item an $i$th row of \eqref{eq:strong:duality:primal} follows either from rescaling the corresponding row of \eqref{eq:inequalities:testing:system:scaled:primal:feasibility} (for $\yn_i >0$), or from \eqref{eq:scenario:for:satisfying:i:th:inequality} (for $\yn_i = 0$),
    \item the dual feasibility constraint \eqref{eq:strong:duality:dual} is obtained simply by substitution to \eqref{eq:inequalities:testing:system:scaled:dual:feasibility},
    \item the complementarity constraints are obviously satisfied: if $y_i >0$, then $i$th row of primal feasibility is satisfied as equality, if $x_i>0$, $i$th row of dual feasibility is satisfied as equality,
    \item the ``scenario feasibility'' constraints \eqref{eq:characterization:weak:optimality:constraints} are satisfied, since the scenario $s^w$ is is clearly correctly built.\qedhere
\end{itemize}
\end{proof}

\begin{corollary}
    \label{cor:inequalities:polynomially:solvable}
    The problem \ref{enu:cwopt} is polynomially solvable via checking feasibility of a linear system if the underlying interval linear program has no equality constraints (i.e. $k=0$).

    The weak optimality test itself can be done by solving the system \eqref{eq:inequalities:testing:system}. If a~scenario witnessing optimality is also necessary, it can be obtained using \eqref{eq:inequality:witness} by solving additional systems of form \eqref{eq:scenario:for:satisfying:i:th:inequality}.
\end{corollary}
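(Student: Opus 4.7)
The plan is to reduce the corollary to three polynomial-time observations, each a near-immediate consequence of Theorem \ref{the:inequality:ilp:polynomial}. First I would verify that the system \eqref{eq:inequalities:testing:system} is an ordinary linear system in the unknowns $(\Bfp, \Bnp, \bp, \yn)$ once the given point $x = (\xf, \xn)$ is substituted. Because $\yn \ge 0$ is enforced by \eqref{eq:inequalities:testing:system:nonnegativity}, the interval-membership constraints \eqref{eq:inequalities:testing:system:scaling:primal} unfold into linear inequalities of the shape $\yn_i \dBf_{ij} \le \Bfp_{ij} \le \yn_i \uBf_{ij}$ and analogous ones for $\Bnp$ and $\bp$; the remaining constraints \eqref{eq:inequalities:testing:system:scaled:primal:feasibility}--\eqref{eq:inequalities:testing:system:scaled:dual:feasibility:nonnegative:2} are manifestly linear for fixed $x$. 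Hence \eqref{eq:inequalities:testing:system} has polynomial size and its feasibility reduces to a single LP.

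Next I would argue that, in the inequality-only setting and with $x$ already fixed, weak feasibility of $x$ can itself be decided in polynomial time. The rows of $\bBf \xf + \bBn \xn \ge \bb$ are independent, since each row carries its own interval coefficients and there is no coupling across rows. Therefore weak feasibility reduces, row by row, to checking whether $\max\{B_i^{\mathrm{f}} \xf + B_i^{\mathrm{n}} \xn - b_i : B_i^{\mathrm{f}} \in \bBf_i,\ B_i^{\mathrm{n}} \in \bBn_i,\ b_i \in \bb_i\} \ge 0$, and this is the maximum of a linear function over a box, which is evaluated in closed form in $O(m+n)$ time per row.

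Finally, the witness-scenario construction in \eqref{eq:inequality:witness} requires only the output $(\Bfp, \Bnp, \bp, \yn)$ produced by the main LP together with, for each index $i$ with $\yn_i = 0$, a solution of the single-row linear feasibility problem \eqref{eq:scenario:for:satisfying:i:th:inequality}, whose solvability is precisely what the weak-feasibility check in the previous paragraph guarantees. Combining these three points, problem \ref{enu:cwopt} is decided---and, when the answer is \emph{yes}, a certificate scenario is produced---by solving at most $\ell + 2$ linear programs of polynomial size. I do not expect a real obstacle: the corollary is essentially a polynomial-time ``packaging'' of Theorem \ref{the:inequality:ilp:polynomial}, and the only place to be mildly careful is in checking that the linearisation of \eqref{eq:inequalities:testing:system:scaling:primal} genuinely succeeds thanks to the imposed sign of $\yn$.
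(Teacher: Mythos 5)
Your proposal is correct and matches the paper's intent: the corollary is stated without proof as an immediate packaging of Theorem \ref{the:inequality:ilp:polynomial}, and your three observations (linearity of \eqref{eq:inequalities:testing:system} for fixed $x$ thanks to $\yn \ge 0$, row-wise polynomial weak-feasibility check, and the witness construction via \eqref{eq:scenario:for:satisfying:i:th:inequality}) are exactly the steps the authors leave implicit. The only detail you add beyond the paper is the explicit closed-form box-maximization argument for weak feasibility, which is standard and correct.
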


    \newcommand{\xiii}{1}
    \newcommand{\xii}{2}
    \newcommand{\bii}{-1}
    \newcommand{\biu}{3.75}
    \newcommand{\bid}{2}
    \newcommand{\uBiici}{4.5}
    \newcommand{\dBiici}{0}
    \newcommand{\dBiicii}{-1.25}
    \newcommand{\uBiicii}{-0.75}
    \newcommand{\uBivci}{-1.25}
    \newcommand{\dBivci}{-2}
    \newcommand{\uBivcii}{1.5}
    \newcommand{\dBivcii}{0.75}
    \newcommand{\uBiiici}{-2}
    \newcommand{\dBiiici}{-4}
    \newcommand{\uBiiicii}{0}
    \newcommand{\dBiiicii}{-1.5}
    \newcommand{\uBici}{3.5}
    \newcommand{\dBici}{1.5}
    \newcommand{\uBicii}{1.5}
    \newcommand{\dBicii}{0.5}
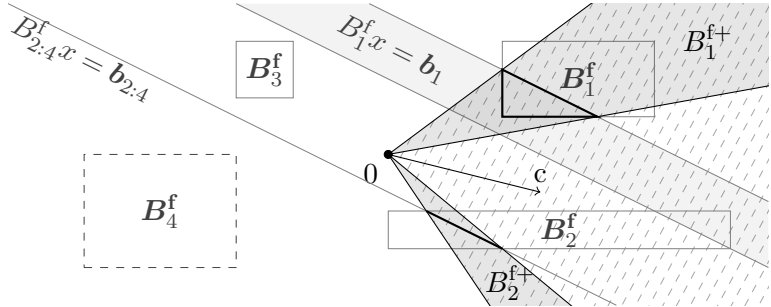
\begin{figure}[b]
    \centering
\begin{tikzpicture}
    \coordinate (lbc) at (-5,-2);
    \coordinate (rtc) at (5,2);
    \pgfdeclarelayer{background}
    \pgfsetlayers{background,main}
    \clip [name path=clip] (lbc) rectangle (rtc);
    \coordinate (o)  node [below left] {0};
    \filldraw (o) circle (1.5pt);
    \begin{scope}[help lines]
        \seg[blower1]{\xiii}{\xii}{\bid}
        \seg[bupper1]{\xiii}{\xii}{\biu}
        \seg[b2]{\xiii}{\xii}{\bii}
    \end{scope}

   \fill [opacity=0.05] (blower1-st) -- (blower1-en) -- (bupper1-en) -- (bupper1-st) -- cycle;
   \path[gray!50!black] ($(blower1-st)!0.5!(bupper1-st)$) -- ($(blower1-en)!0.5!(bupper1-en)$) node[pos=0.06,anchor=west,sloped] {$\Bf_{1} x = \bb_1$}; 
   \path[gray!50!black] (b2-st) -- (b2-en) node[pos=0,anchor=west,sloped,yshift=-0.2cm,xshift=0.05cm] {$\Bf_{2:4}x=\bb_{2:4}$}; 

    \draw [name path=B1,help lines] (\dBici,\dBicii) rectangle (\uBici,\uBicii) node [gray!50!black,pos=.5,opacity=1] {$\bBf_{1}$} ;
    \draw [name path=B2,help lines] (\dBiici,\dBiicii) rectangle (\uBiici,\uBiicii) node [gray!50!black,pos=.5,opacity=1] {$\bBf_{2}$} ;
    \draw [name path=B3,help lines,dashed,gray!50!black] (\dBiiici,\dBiiicii) rectangle (\uBiiici,\uBiiicii) node [gray!50!black,pos=.5,opacity=1] {$\bBf_{4}$} ;
    \draw [name path=B4,help lines] (\dBivci,\dBivcii) rectangle (\uBivci,\uBivcii) node [gray!50!black,pos=.5,opacity=1] {$\bBf_{3}$} ;

    \path [name intersections={of=B1 and bupper1,name=cone1}];
    \path [name intersections={of=B2 and b2,name=cone2}];

    \path (0,0) -- (cone1-1) --([turn]0:100cm) coordinate (cone1-limit-1);
    \path (0,0) -- (cone1-2) --([turn]0:100cm) coordinate (cone1-limit-2);
    \path (0,0) -- (cone2-1) --([turn]0:100cm) coordinate (cone2-limit-1);
    \path (0,0) -- (cone2-2) --([turn]0:100cm) coordinate (cone2-limit-2);
    
    \filldraw [fill opacity=0.1,name path=cone1] (0,0)--(cone1-limit-1)--(cone1-limit-2)--cycle;
    \filldraw [fill opacity=0.1,name path=cone2] (0,0)--(cone2-limit-1)--(cone2-limit-2)--cycle;
    \path [name intersections={of=cone2 and b2,name=Bset2}];
    \path [name intersections={of=cone1 and bupper1,name=Bset1}];
    \path (current bounding box.north east) -- ++(-0.8,-0.5) node {$\Bfp_1$};
    \path (Bset2-2) -- ++(0.1,-0.45) node {$\Bfp_2$};

    \draw[thick] (Bset2-1) -- (Bset2-2);
    \draw[thick] (Bset1-1) -- (Bset1-2) -- (\dBici,\dBicii) --cycle;

    \draw[->] (0,0)--(2,-0.5) node[above] {c};

    \begin{pgfonlayer}{background}
    \clip  (lbc) rectangle (rtc);
    \begin{scope}[loosely dashed,very thin,gray!70!white]
    \foreach \factor in {0.2,0.4,...,7} {
        \draw[dashed] ($\factor*(Bset1-1)$) -- ($\factor*(Bset2-1)$);
    }
\end{scope}\end{pgfonlayer}
    
\end{tikzpicture}
\caption{Illustration of Example \ref{exa:illustration}. The space of rows of the matrix $\Bf$ is depicted. For example, the line $\Bf_{2:4}x=\bb_{2:4}$ contains pairs of coefficients $(\Bf_{2,1},\Bf_{2,2}), (\Bf_{3,1}, \Bf_{3,2}), (\Bf_{4,1},\Bf_{4,2})$ such that second to fourth constraint is satisfied as equality for the given $x$.}
    \label{fig:illustration}
\end{figure}

\begin{example}
    \label{exa:illustration}
    We demonstrate the idea on a small example.
    The geometry behind Theorem~\ref{the:inequality:ilp:polynomial} is depicted on Figure \ref{fig:illustration}. The figure shows the space of coefficients of constraints.
    
 The setting is the following: assume that we are given interval linear program 
 \begin{equation*}
  \begin{alignedat}{2}%
    \min\ &&\phantom{=}
    (\cf)\T  \xf  &\quad \text{s.t.}   \\
&&\phantom{=}\bBf \xf  &\ge \bb
\end{alignedat}
 \end{equation*}
  with
\begin{equation}
    \bBf = \left(
    \begin{array}{rlcrl}
        [\dBici,&\uBici]&, & [\dBicii,& \uBicii]  \\{}%
        [\dBiici,&\uBiici]&, & [\dBiicii,& \uBiicii]  \\{}%
        [\dBivci,&\uBivci]&, & [\dBivcii,& \uBivcii]%
    \end{array}
    \right),\quad
    \bb = \begin{pmatrix}
        [\bid,\biu] \\
        \bii\\
        \bii
    \end{pmatrix},\quad
    \cf = \left(\begin{matrix}
    2 \\
    -0.5\end{matrix}\right),
\end{equation}
i.e. there are two free variables and the  objective function is crisp. 
We are to test weak optimality for $x = \left(\begin{smallmatrix}
    \xii \\
    \xiii
\end{smallmatrix}\right)$.

Note that the third constraint can't be satisfied as equality for the given $x$. This enforces $y_3 = 0$. Hence the third rows of \eqref{eq:inequalities:testing:system:scaling:primal} and \eqref{eq:inequalities:testing:system:scaled:primal:feasibility} are null. 

Our given $x$ is weakly optimal if $c$ is in the conic hull of all the feasible $\Bf_1 \in \bBf_1$ and $\Bf_2\in \bBf_2$ (see the bold triangle and the bold line segment in the figure). 
 The conic hull itself   is depicted with dashed pattern. Note that it corresponds to left hand sides of the constraints \eqref{eq:strong:duality:dual} and \eqref{eq:inequalities:testing:system:scaled:dual:feasibility}.

 The gray cones are cones of all the feasible $\Bfp_1$ and $\Bfp_2$.
Since their conic hull contains $c$, we have that $x$ is weakly optimal.

Just for illustration, there is also an additional fourth constraint $$([\dBiiici,\uBiiici], [\dBiiicii, \uBiiicii]) \xf \ge \bii$$ in the figure. There is no $\Bf_4$ such that $x$ is feasible. With this fourth constraint $x$ is not weakly optimal.
\end{example}

\subsection{The general case: equations are allowed}
\label{sub:the_general_case}

Now consider the problem \ref{enu:cwopt} in its full generality. The key in the special case was in the nonnegativity of $\yn$. Assuming equality constraints, we need to consider also free dual variables $\yf$. However, the linearization based on disjunctive programming requires variables with a fixed sign (nonnegative or nonpositive ones).
The underlying problem is that there is no way to express a nonconvex set by means of linear systems, as shows Example \ref{exa:illustration2}:

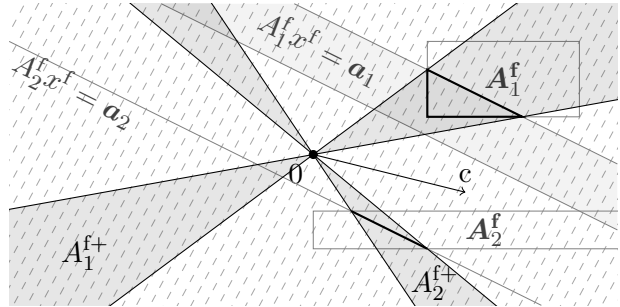
\begin{figure}[b]
    \centering
\begin{tikzpicture}
    \coordinate (lbc) at (-4,-2);
    \coordinate (rtc) at (4,2);
    \pgfdeclarelayer{background}
    \pgfsetlayers{background,main}
    \clip [name path=clip] (lbc) rectangle (rtc);
    \coordinate (o)  node [below left] {0};
    \filldraw (o) circle (1.5pt);
    \begin{scope}[help lines]
        \seg[blower1]{\xiii}{\xii}{\bid}
        \seg[bupper1]{\xiii}{\xii}{\biu}
        \seg[b2]{\xiii}{\xii}{\bii}
    \end{scope}

   \fill [opacity=0.05] (blower1-st) -- (blower1-en) -- (bupper1-en) -- (bupper1-st) -- cycle;
   \path[gray!50!black] ($(blower1-st)!0.5!(bupper1-st)$) -- ($(blower1-en)!0.5!(bupper1-en)$) node[pos=0.06,anchor=west,sloped] {$\Af_{1} \xf = \ba_1$}; 
   \path[gray!50!black] (b2-st) -- (b2-en) node[pos=0,anchor=west,sloped,yshift=-0.2cm,xshift=0.05cm] {$\Af_{2}\xf=\ba_{2}$}; 

    \draw [name path=B1,help lines] (\dBici,\dBicii) rectangle (\uBici,\uBicii) node [gray!50!black,pos=.5,opacity=1] {$\bAf_{1}$} ;
    \draw [name path=B2,help lines] (\dBiici,\dBiicii) rectangle (\uBiici,\uBiicii) node [gray!50!black,pos=.5,opacity=1] {$\bAf_{2}$} ;

    \path [name intersections={of=B1 and bupper1,name=cone1}];
    \path [name intersections={of=B2 and b2,name=cone2}];

    \path (0,0) -- (cone1-1) --([turn]0:100cm) coordinate (cone1-limit-1);
    \path (0,0) -- (cone1-2) --([turn]0:100cm) coordinate (cone1-limit-2);
    \path (0,0) -- (cone1-1) --([turn]180:100cm) coordinate (cone1-limit-3);
    \path (0,0) -- (cone1-2) --([turn]180:100cm) coordinate (cone1-limit-4);
    \path (0,0) -- (cone2-1) --([turn]0:100cm) coordinate (cone2-limit-1);
    \path (0,0) -- (cone2-2) --([turn]0:100cm) coordinate (cone2-limit-2);
    \path (0,0) -- (cone2-1) --([turn]180:100cm) coordinate (cone2-limit-3);
    \path (0,0) -- (cone2-2) --([turn]180:100cm) coordinate (cone2-limit-4);
    
    \filldraw [fill opacity=0.1,name path=cone1] (0,0)--(cone1-limit-1)--(cone1-limit-2)--cycle;
    \filldraw [fill opacity=0.1,name path=cone2] (0,0)--(cone2-limit-1)--(cone2-limit-2)--cycle;
    \filldraw [fill opacity=0.1,name path=cone3] (0,0)--(cone1-limit-3)--(cone1-limit-4)--cycle;
    \filldraw [fill opacity=0.1,name path=cone4] (0,0)--(cone2-limit-3)--(cone2-limit-4)--cycle;
    \path [name intersections={of=cone2 and b2,name=Bset2}];
    \path [name intersections={of=cone1 and bupper1,name=Bset1}];
    \path (current bounding box.south west) -- ++(1,0.7) node {$\Afp_1$};
    \path (Bset2-2) -- ++(0.1,-0.45) node {$\Afp_2$};

    \draw[thick] (Bset2-1) -- (Bset2-2);
    \draw[thick] (Bset1-1) -- (Bset1-2) -- (\dBici,\dBicii) --cycle;

    \draw[->] (0,0)--(2,-0.5) node[above] {c};

    \begin{pgfonlayer}{background}
    \clip  (lbc) rectangle (rtc);
    \begin{scope}[loosely dashed,very thin,gray!70!white]
    \foreach \factor in {-7,-6.8,...,-0.2,0.001,0.2,0.4,...,7} {
        \draw[dashed,shorten >=-3cm,shorten <=-3cm] ($\factor*(Bset1-1)$) -- ($\factor*(Bset2-1)$);
    }
\end{scope}\end{pgfonlayer}
    
\end{tikzpicture}
\caption{Illustration of Example \ref{exa:illustration2}. The space of rows of the matrix $\Af$ is depicted. For example, the line $\Af_{2}x=\bb_{2}$ contains pairs of coefficients $(\Af_{2,1},\Af_{2,2})$ such that second constraint is satisfied.}
    \label{fig:illustration:2}
\end{figure}

\begin{example}
    \label{exa:illustration2}
    Consider now the ILP from Example \ref{exa:illustration} with equalities instead of inequalities (and also only with the first two constraints):
 \begin{equation*}
  \begin{alignedat}{2}%
    \min\ &&\phantom{=}
    (\cf)\T  \xf  &\quad \text{s.t.}   \\
&&\phantom{=}\bAf \xf  &= \ba,
\end{alignedat}
 \end{equation*}
  where
\begin{equation}
    \bAf = \left(
    \begin{array}{rlcrl}
        [\dBici,&\uBici]&, & [\dBicii,& \uBicii]  \\{}%
        [\dBiici,&\uBiici]&, & [\dBiicii,& \uBiicii] 
    \end{array}
    \right),\quad
    \ba = \begin{pmatrix}
        [\bid,\biu] \\
        \bii\\
    \end{pmatrix},\quad
    \cf = \left(\begin{matrix}
    2 \\
    -0.5\end{matrix}\right).
\end{equation}
Again, we are to test weak optimality for $x = \left(\begin{smallmatrix}
    \xii \\
    \xiii
\end{smallmatrix}\right)$.

The space of $\Af_1$ and $\Af_2$ is depicted on Figure \ref{fig:illustration:2}. The meaning of elements of the figure is analogous to Figure \ref{fig:illustration}. 

The sets of all primarily feasible normals $\Af_1$ and $\Af_2$ can now be scaled by both positive and negative factors. The resulting sets are nonconvex double-cones. 

Note that for a classical linear program, these double-cones are degenerated to a~single line, which is actually a~convex set.
\end{example}

To emphasize: from the perspective of the method presented in Section \ref{sub:inequalities}, the problem is that we cannot linearize the dual feasibility constraint, because we need to sum up points from possibly nonconvex sets. Alternatively, the problem can be viewed in the constraints \eqref{eq:inequalities:testing:system:scaling:primal}. Consider a constraint $\be \in y \bbe$ for some $\be \in \R$ and $\bbe \in \IR$. If $y \ge 0$ (or $y \le 0$), it can rewritten as $y \dbe \le \be \le y\ube$ (or $y \ube \le \be \le y\dbe$), however, if $y$ can be arbitrary, the two cases must be distinguished.

However, the standard orthant decomposition of $\yf$-space can be apparently used here. We do so in Theorem \ref{the:algorithm:for:general:case}.

\begin{definition}
    \label{def:orthant:problem}
    For given data $\Dp = (\bAf, \bAn, \bBf, \bBn, \ba, \bb, \bcf, \bcn)$ of an ILP and a given sign vector $\sigma \in \{-1,1\}^{k}$, the \emph{testing system for $\Dp$ in orthant $\sigma$} is the system  \eqref{eq:general:system} in the form
    \begin{subequations}
        \label{eq:general:system}
        \begin{gather}
            \label{eq:general:system:scaling:coeffs:equalities}
            \Afp \in \diag(\yf)\bAf, \quad \Anp  \in \diag(\yf)\bAn,\quad \ap\in \diag(\yf)\ba,\\
            \label{eq:general:system:scaling:coeffs:inequalities}
            \Bfp \in \diag(\yn)\bBf, \quad \Bnp  \in \diag(\yn)\bBn,\quad \bp\in \diag(\yn)\bb,\\
            \label{eq:general:system:primal:feasibility}
            \Afp \xf + \Anp \xn = \ap, \qquad \Bfp \xf + \Bnp \xn = \bp\\
            \label{eq:general:system:dual:feasibility}
            (e\T \Afp + e\T \Bfp)\T \in \bcf\\
            \label{eq:general:system:scaled:dual:feasibility:nonnegative:1}
            (e\T\Anp+e\T\Bnp)_i \in (\bcn)_i\quad \forall i \in \{\iota| x_\iota > 0\},\\
            \label{eq:general:system:scaled:dual:feasibility:nonnegative:2}
            (e\T\Anp+e\T\Bnp)_i  \le (\ucn)_i\quad \forall i \in \{\iota| x_\iota = 0\}, \\
            \yn \ge 0,
            \label{eq:general:system:nonnegativity}\\
            \diag(\sigma)\yf \ge 0.
            \label{eq:general:system:orthant}
    \end{gather}
    \end{subequations}
\end{definition}
\begin{theorem}
    \label{the:algorithm:for:general:case}
    Assume ILP with data $\Dp=(\bAf, \bAn, \bBf, \bBn, \ba, \bb, \bcf, \bcn)$. A solution $x = (\xf,\xn) $ is weakly optimal if and only if $x$ is a weakly feasible solution of $\ILP(\Dp)$ and there is $\sigma \in \{-1,1\}^{k}$ such that the testing system for $\Dp$ in orthant $\sigma$ is feasible for the fixed $x$. If so, a scenario witnessing optimality of $x$ is 
    \begin{equation}
        \label{eq:general:witness}s^w = (\Af,\An,\Bf, \Bn, a, b, (e\T\Afp\!\!+\!e\T\Bfp)\T, \cn),
    \end{equation} 
    where $i$th row of $\Af, \An, a$ is determined as follows:
    \begin{itemize}
        \item if $\yf_i\not = 0$, then $(\Af_i, \An_i, a_i) = \frac{1}{\yf_i} (\Afp_i, \Anp_i, \ap_i)$, 
        \item else $(\Af_i, \An_i, a_i)$ is determined as a solution of the linear system
    \end{itemize}
            \begin{equation}
            \Af_i \xf + \An_i \xn = a_i, \quad \Af_i \in \bAf_i,\quad \An_i \in \bAn_i, \quad a_i \in \ba_i,
                \label{eq:general:scenario:B}
            \end{equation}
    $i$th row of $\Bf, \Bn, b$ is determined as follows:
    \begin{itemize}
        \item if $\yn_i > 0$, then $(\Bf_i, \Bn_i, b_i) = \frac{1}{\yn_i} (\Bfp_i, \Bnp_i, \bp_i)$, 
        \item else $(\Bf_i, \Bn_i, b_i)$ is determined as a solution of \eqref{eq:scenario:for:satisfying:i:th:inequality},
    \end{itemize}
    and $i$th row of $\cn$ is determined as
    \begin{equation*}
        \cn_i = \left\{
            \begin{array}{ll}
                \ucn_i& \quad\text{if $x_i=0$},\\
                (e\T\Anp+e\T\Bnp)_i&\quad\text{otherwise}.
    \end{array}
    \right.
    \end{equation*}
   
\end{theorem}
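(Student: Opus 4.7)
The plan is to extend the argument of Theorem \ref{the:inequality:ilp:polynomial} to the equality case by combining the disjunctive-programming linearization with a standard orthant decomposition on the free dual multipliers $\yf$. As in Theorem \ref{the:inequality:ilp:polynomial}, I would use Lemma \ref{lem:characterization:weak:optimality} to reduce weak optimality to feasibility of the strong-duality system \eqref{eq:strong:duality}, and then remove the nonlinearity by the substitutions $(\Afp,\Anp,\ap) = \diag(\yf)(\Af,\An,a)$ and $(\Bfp,\Bnp,\bp) = \diag(\yn)(\Bf,\Bn,b)$. The sign pattern $\sigma$ will encode which orthant $\yf$ lives in, so that within the orthant the substitution yields a linear system in the primed variables.

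For the ``$\Rightarrow$'' direction I would start from weak optimality, invoke Lemma \ref{lem:characterization:weak:optimality} to obtain a scenario $\scp$ and duals $(\yf,\yn)$ solving \eqref{eq:strong:duality}, and define $\sigma_i = 1$ if $\yf_i \ge 0$ and $\sigma_i = -1$ otherwise, which makes \eqref{eq:general:system:orthant} hold. Defining the primed quantities as above then makes \eqref{eq:general:system:scaling:coeffs:equalities} and \eqref{eq:general:system:scaling:coeffs:inequalities} immediate; rescaling the rows of \eqref{eq:strong:duality:primal} by $\yf_i$ or $\yn_i$ (invoking complementarity \eqref{eq:strong:duality:complementarity} to promote the inequality rows with $\yn_i>0$ to equalities and annihilating rows with $\yn_i=0$) gives \eqref{eq:general:system:primal:feasibility}; and \eqref{eq:general:system:dual:feasibility}--\eqref{eq:general:system:scaled:dual:feasibility:nonnegative:2} are rewritings of \eqref{eq:strong:duality:dual} sharpened by \eqref{eq:strong:duality:complementarity:primal} on the support of $\xn$. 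Weak feasibility of $x$ is inherited directly from $\scp$.

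For the ``$\Leftarrow$'' direction together with the ``If so'' statement, I would assume weak feasibility of $x$ and feasibility of the orthant system, and then construct the witness scenario $s^w$ according to \eqref{eq:general:witness}. Rows with $\yf_i \ne 0$ or $\yn_i > 0$ are recovered by dividing the primed row by the corresponding multiplier; the remaining rows are filled in by picking any feasible coefficients, whose existence is guaranteed by weak feasibility through solvability of \eqref{eq:general:scenario:B} and \eqref{eq:scenario:for:satisfying:i:th:inequality}. I would then verify that $(s^w,\yf,\yn)$ solves \eqref{eq:strong:duality} row-by-row: primal feasibility follows by rescaling on ``active'' rows and by the completing choice on the remaining rows; dual feasibility follows directly from \eqref{eq:general:system:dual:feasibility}--\eqref{eq:general:system:scaled:dual:feasibility:nonnegative:2} after expanding $\cf = (e\T\Afp + e\T\Bfp)\T$; and both complementarity identities \eqref{eq:strong:duality:complementarity} and \eqref{eq:strong:duality:complementarity:primal} hold automatically from the case split (rows with $\yn_i=0$ trivialize, rows with $\yn_i>0$ are equalities, and the definition of $\cn_i$ matches $(e\T\Anp+e\T\Bnp)_i$ whenever $\xn_i>0$).

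The main obstacle is precisely the nonconvexity illustrated in Example \ref{exa:illustration2}: without a fixed sign for $\yf_i$, the ``rescaled normal'' set $\{\yf_i \Af_i : \Af_i \in \bAf_i,\, \yf_i \in \R\}$ is a double cone that cannot be captured by a linear system, so the straight disjunctive-programming trick of Section \ref{sub:inequalities} breaks down. The orthant decomposition resolves this: forcing $\diag(\sigma)\yf \ge 0$ convexifies each piece into a single cone, and the constraint $\Afp_i \in \yf_i \bAf_i$ then unfolds into the linear inequalities $\yf_i \dAf_i \le \Afp_i \le \yf_i \uAf_i$ (or the reversed pair, depending on $\sigma_i$). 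This is what forces the exponential factor $2^k$ and also explains why the inequality-only case of Theorem \ref{the:inequality:ilp:polynomial} required no decomposition, since $\yn$ is already sign-constrained by duality.
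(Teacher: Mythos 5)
Your proposal is correct and follows exactly the route the paper intends: the paper's own proof of Theorem~\ref{the:algorithm:for:general:case} is just the remark that it is ``very analogous'' to Theorem~\ref{the:inequality:ilp:polynomial}, and your argument is precisely that analogy carried out in detail --- the substitution $(\Afp,\Anp,\ap)=\diag(\yf)(\Af,\An,a)$ with the sign of $\yf$ fixed per orthant, the row-by-row verification of \eqref{eq:strong:duality} via complementarity, and the completion of inactive rows using weak feasibility. No gaps; your write-up is in fact more explicit than the paper's.
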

\begin{proof}
    Very analogous to the proof of Theorem \ref{the:inequality:ilp:polynomial}.
\end{proof}

\begin{corollary}
    The problem \ref{enu:cwopt} for an ILP with data $\Dp$ can be solved by solving $2^k$ linear systems, one for every $\sigma \in \{-1,1\}^k$. Recall that $k$ is the number of equality constraints in the ILP. The size of the linear systems to be solved is linear in the number of variables and constraints of the ILP.
\end{corollary}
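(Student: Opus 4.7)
The plan is to mirror the proof of Theorem~\ref{the:inequality:ilp:polynomial}, extending the disjunctive-programming linearization to cope with the free dual variables $\yf$ associated with equality constraints. By Lemma~\ref{lem:characterization:weak:optimality}, weak optimality of $x$ is equivalent to solvability of the strong-duality system~\eqref{eq:strong:duality} for the fixed $x$, so it suffices to establish that this solvability is equivalent to weak feasibility of $x$ together with feasibility of~\eqref{eq:general:system} for some $\sigma$, and to verify that $s^w$ in~\eqref{eq:general:witness} witnesses optimality.

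For the ``$\Rightarrow$'' direction, starting from a scenario $\scp$ and duals $(\yf,\yn)$ solving~\eqref{eq:strong:duality}, I would pick any $\sigma\in\{-1,1\}^k$ with $\diag(\sigma)\yf\ge 0$ and substitute $(\Afp,\Anp,\ap):=\diag(\yf)(\Af,\An,a)$ together with $(\Bfp,\Bnp,\bp):=\diag(\yn)(\Bf,\Bn,b)$. The scaling conditions \eqref{eq:general:system:scaling:coeffs:equalities}--\eqref{eq:general:system:scaling:coeffs:inequalities} hold by construction; the scaled primal equations~\eqref{eq:general:system:primal:feasibility} follow by scaling~\eqref{eq:strong:duality:primal} row-wise, using complementary slackness~\eqref{eq:strong:duality:complementarity} to turn each active inequality row into an equality while rows with zero dual collapse to $0=0$; and the dual constraints \eqref{eq:general:system:dual:feasibility}--\eqref{eq:general:system:scaled:dual:feasibility:nonnegative:2} rewrite~\eqref{eq:strong:duality:dual} after absorbing $(\yf,\yn)$ into the scaled coefficients, with~\eqref{eq:strong:duality:complementarity:primal} supplying equality on coordinates where $\xn_i>0$. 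For the ``$\Leftarrow$'' direction together with the witness claim, given a solution of~\eqref{eq:general:system} for some $\sigma$ and a weakly feasible $x$, I would assemble $s^w$ as prescribed by~\eqref{eq:general:witness}: on rows with nonzero dual the original coefficients are recovered by dividing by $\yf_i$ or $\yn_i$, and on rows with vanishing dual the scaled system carries no information, so weak feasibility of $x$ is invoked to pick admissible coefficients satisfying~\eqref{eq:general:scenario:B} or~\eqref{eq:scenario:for:satisfying:i:th:inequality}. A routine row-wise check then verifies that $(\yf,\yn,s^w)$ satisfies~\eqref{eq:strong:duality}: complementary slackness is automatic because inequality rows with $\yn_i>0$ are active by construction, equality rows are always active, and the bimodal choice of $\cn_i$ makes~\eqref{eq:strong:duality:complementarity:primal} hold.

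The step I expect to require the most care is justifying the orthant decomposition itself. Unlike the inequality case, the scaling relation $\ap\in\diag(\yf)\ba$ is genuinely nonlinear and describes a nonconvex double-cone in each coordinate $(\yf_i,\ap_i)$ (as visualised in Example~\ref{exa:illustration2}), so no single linear system can encode it. Fixing $\sigma$ flips the bounds of each $\ba_i$ consistently and reduces the relation to the linear form already treated in Section~\ref{sub:inequalities}, while taking the disjunction over all $2^k$ orthants reassembles the full nonconvex cone. Once this disjunction is in place, the remainder of the argument is essentially the same bookkeeping as in Theorem~\ref{the:inequality:ilp:polynomial}, modulo handling the equality rows in parallel with the inequality rows.
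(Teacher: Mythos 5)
Your proposal is correct and takes essentially the same route as the paper: the corollary is an immediate consequence of Theorem~\ref{the:algorithm:for:general:case}, whose proof the paper itself only sketches as ``very analogous'' to that of Theorem~\ref{the:inequality:ilp:polynomial}, and your argument fills in precisely that analogy (orthant decomposition of the $\yf$-space combined with the disjunctive-programming rescaling). No substantive difference in approach.
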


\section{Conclusions}
\label{sec:conclusion}

We proved that the problem of testing weak optimality of a given solution of a~given interval linear program is NP-hard. We proposed an algorithm, based on orthant decomposition, which can decide the problem via solving of $2^k$ linear systems, where $k$ is the number of equality constraints in the given interval linear program.
In particular, this means that the proposed method works in polynomial time for interval linear programs with inequality constraints only.

\bibliographystyle{spmpsci}      
\bibliography{ref}   


\end{document}